\renewcommand{\@seccntformat}[1]{\bf\csname the#1\endcsname.}
\renewcommand{\section}{\@startsection{section}{1}
	\z@{.7\linespacing\@plus\linespacing}{.5\linespacing}
	{\normalfont\upshape\bfseries\centering}}
\renewcommand{\@biblabel}[1]{\@ifnotempty{#1}{#1.}}
\theoremstyle{plain}
\newtheorem{thm}{Theorem}[section]
\newtheorem{prop}[thm]{Proposition}
\newtheorem{cor}[thm]{Corollary}
\theoremstyle{definition}
\newtheorem{ex}[thm]{Example}
\newtheorem{defn}[thm]{Definition}
\newtheorem{rem}{Remark}[section]
\def \>{\succ}
\def \<{\prec}
\begin{document}	
	\title[Nil MANSURO\u{G}LU \textsuperscript{}, Bouzid Mosbahi \textsuperscript{}]{ On structures of BiHom-Superdialgebras and their derivations}
	\author{ Nil MANSURO\u{G}LU \textsuperscript{1},\quad Bouzid Mosbahi \textsuperscript{2}}
\address{\textsuperscript{1}Kırşehir Ahi Evran University, Faculty of Arts and Science,
Department of Mathematics, 40100 Kırşehir, Turkey}
   	\address{\textsuperscript{2}Department of Mathematics, Faculty of Sciences, University of Sfax, Sfax, Tunisia}
		
\email{\textsuperscript{1}nil.mansuroglu@ahievran.edu.tr}
\email{\textsuperscript{2}mosbahi.bouzid.etud@fss.usf.tn}
	
	\keywords{BiHom-superdialgebras, BiHom-associative superalgebra, Hom-superdialgebra, Derivation.}
	\subjclass[2010]{16Z05, 16D70, 17A60, 17B05, 17B40}
	
	
	\begin{abstract}
 BiHom-superdialgebras are clear generalization of Hom-superdialgebras. The purpose of this note is to describe and to survey structures of BiHom-superdialgebras. Then we derive derivations of BiHom-superdialgebras.

\end{abstract}
\maketitle \section{ Introduction}\label{introduction}
The associative superdialgebras are also known as diassociative superalgebras. In 2015, C. Wang, Q. Zhang and Z. Weiz described them  as a generalization of associative superalgebras. These authors   developed  many results on associative superdialgebra in the paper \cite{CZ}. As a generalization of associative superalgebras, they have two associative products and also they comply three other statements. On the other hand, many results of superdialgebras were investigated in \cite{LH4,LH5,LH6}. There are many application areas of these algebras, for instance, physics, classical geometry and  non-commutative geometry. For more details (see \cite{GraMakMenPan,Loday,Lod,MakZah,Poz,RikRakBas}). The principal beginning point of this paper is based on  the concept of BiHom-dialgebras introduced by Zahari and Bakayoko in \cite{BakZah}. In this paper, the authors studied BiHom-associative dialgebras and they gave the central extensions  and  the classifications of BiHom-associative dialgebras with dimension $n\geq 4$. By following their approach in \cite{BakZah}, we present the concepts on BiHom-superdialgebras. Moreover, we state some main properties on such algebras.

\section{ Structure of BiHom-superdialgebras}

By using some definitions on superdialgebras and Hom-dialgebras in \cite{LH6,Y}, we introduce some structure of BiHom-superdialgebras. Furthermore, we survey some important consequences on BiHom-superdialgebras which are similar to some statements on Hom-superdialgebras \cite{BakZah}.
\begin{defn}\label{def1}
Let $H$ be a superspace. If two even bilinear mapings $\dashv,\vdash: H \times H \longrightarrow H$  satisfy the next statements
\begin{align*}
&\text{(i)  }\quad p \vdash (q \dashv r) = (p \vdash q) \dashv r, \\
&\text{(ii)  }\quad p \dashv (q \dashv r) = (p \dashv q) \dashv r = p \dashv (q \vdash r), \\
&\text{(iii)}\quad p \vdash (q \vdash r) = (p \vdash q) \vdash r = (p \dashv q) \vdash r
\end{align*}
for every homogeneous elements $p, q, r \in H$, then  a 3-tuple $(H,\dashv,\vdash)$ is called a superdialgebra.
\end{defn}

\begin{defn}
Given a superspace $H$.  If two even bilinear mapings $\dashv,\vdash: H\times H\longrightarrow H$  and an even superspace homomorphism
$\alpha : H \longrightarrow H$ satisfy the next properties
\begin{align*}
&\text{(i)  }\quad\alpha(p \dashv q) = \alpha(p) \dashv \alpha(q),\quad \alpha(p \vdash q) = \alpha(p) \vdash \alpha(q),\\
&\text{(ii)  }\quad\alpha(p) \dashv(q \dashv r) = (p \dashv q) \dashv \alpha(r) =\alpha(p) \dashv (q \vdash r),\\
&\text{(iii)  }\quad\alpha(p) \vdash (q \vdash r) = (p \vdash q) \vdash \alpha(r) = (p \dashv q) \vdash \alpha(r),\\
&\text{(iv)  }\quad\alpha(p) \vdash (q \dashv r) = (p \vdash q) \dashv \alpha(r)
\end{align*}
for every homogeneous elements $p, q, r \in H,$ then  a tuple $(H,\dashv,\vdash\alpha)$ is said to be a Hom-superdialgebra.
\end{defn}
\begin{defn}
Given a superspace $H$.    If an even bilinear map $\cdot : H \times H \longrightarrow H$ and two even superspace homomorphisms $\alpha,\epsilon : H \longrightarrow H$ hold the next properties
\begin{align*}
&\text{(i)}\quad \alpha \cdot \epsilon = \epsilon \cdot \alpha,\\
&\text{(ii)}\quad \alpha(p\cdot q) = \alpha(p)\cdot \alpha(q),\: \epsilon(p\cdot q) = \epsilon(p)\cdot\epsilon(q),\\
&\text{(iii)}\quad \alpha(p)\cdot (q\cdot r) = (p\cdot q)\cdot\epsilon(r)
\end{align*}
for every homogeneous elements $p,q,r \in H,$ then a 4-tuple $(H, \cdot,\alpha,\epsilon)$ is said to be a BiHom-associative superalgebra.
\end{defn}
\begin{defn}\label{def2}
Given a superspace $H$.  If two even bilinear mapings $\dashv,\vdash: H \times H \longrightarrow H$ and two even superspace homomorphisms
$\alpha,\epsilon : H \longrightarrow H$ satisfy the following axioms
\begin{align*}
 &\text{(i)}\quad \alpha\circ \epsilon = \epsilon \circ\alpha, \\
&\text{(ii)}\quad\alpha(p \dashv q) = \alpha(p) \dashv \alpha(q),\quad \alpha(p \vdash q) = \alpha(p) \vdash\alpha(q),\\
&\text{(iii)}\quad\epsilon(p \dashv q) = \epsilon(p) \dashv \epsilon(q), \quad \epsilon(p \vdash q) = \epsilon(p) \vdash\epsilon(q),\\
&\text{(iv)}\quad(p \dashv q) \dashv \epsilon(r) = \alpha(p) \dashv (q \dashv r),\\
&\text{(v)}\quad(p \vdash q) \dashv \epsilon(r) = \alpha(p) \vdash (q \dashv r),\\
&\text{(vi)}\quad(p \dashv q) \vdash \epsilon(r) = \alpha(p) \dashv (q \vdash r),\\
&\text{(vii)}\quad(p \vdash q) \vdash \epsilon(r) = \alpha(p) \vdash (q \vdash r)
\end{align*}
for every homogeneous elements $p, q, r \in H$, then a 5-tuple $(H,\dashv,\vdash,\alpha,\epsilon)$ is called a BiHom-superdialgebra.
\end{defn}
Here $\alpha$ and $\epsilon$ are called structure maps of $H$. Since the maps $\alpha,\epsilon$ commute, for any integer $m,n$,  we denote by
$$\alpha^{m}\epsilon^{n} = \underbrace{\alpha \circ\ldots\circ \alpha}_{(m-\text{times})}\circ\underbrace{\epsilon \circ \ldots\circ \epsilon}_{(n-\text{times})}.$$
Particularly, $\alpha^{0}\epsilon^{0} = Id_H$, $\alpha^{1}\epsilon^{1}= \alpha \epsilon$ and  $\alpha^{-m}\epsilon^{-n}$ is the inverse of $\alpha^{m}\epsilon^{n}.$
\begin{defn}
Given  a BiHom-superdialgebra  $(H,\dashv,\vdash,\alpha,\epsilon)$, when  $\alpha$ and $\epsilon$ are bijection, then  $(H,\dashv,\vdash,\alpha,\epsilon)$ is called a regular BiHom-superdialgebra.
\end{defn}
\begin{rem}
 If  $p\vdash q = p\dashv q = p\cdot q$ for any $p,q\in H$, then each BiHom-associative superalgebra $H$ is a BiHom-superdialgebra.
\end{rem}
\begin{ex}
By taking $\epsilon = \alpha$, any Hom-superdialgebra becomes a BiHom-superdialgebra. By setting $\alpha = \epsilon = id$, any superdialgebra is a BiHom-superdialgebra.
\end{ex}

\begin{defn}
Given two BiHom-superdialgebras $(H_1,\dashv,\vdash,\alpha,\epsilon)$ and $(H_2, \dashv^{\prime}, \vdash^{\prime},\alpha^{\prime},\epsilon^{\prime})$ and let $g:H_1 \longrightarrow H_2$ be an even homomorphism.  $g$ is said to be a morphism of BiHom-superdialgebras if
$$g \circ\alpha = \alpha^{\prime}\circ g,\quad g\circ\epsilon =\epsilon^{\prime}\circ g$$ and $$g(p) \dashv^{\prime} g(q) = g(p\dashv q), \quad g(p) \vdash^{\prime} g(q) = g(p\vdash q)$$ for every $p,q \in H_1$.
\end{defn}
\begin{defn}
 Let $\alpha$ and $\epsilon$ be two morphisms. A BiHom-superdialgebra $(H,\dashv,\vdash,\alpha,\epsilon)$ is said to be a multiplicative BiHom-superdialgebra.
 \end{defn}
\begin{defn}
 if $\alpha$ and $\epsilon$ are bijective, we say that $(H,\dashv,\vdash,\alpha,\epsilon)$ is  a regular BiHom-superdialgebra.
\end{defn}

\begin{thm} \label{thm1}
Let $(H,\dashv,\vdash,\alpha,\epsilon)$ be a BiHom-superdialgebra and $\alpha^{\prime} ,\epsilon^{\prime}:H \longrightarrow H$ be two commuting endomorphisms. Consider $\triangleleft =\dashv\circ (\alpha^{\prime} \otimes \epsilon^{\prime})$ and $\triangleright =\vdash\circ (\alpha^{\prime} \otimes \epsilon^{\prime})$.  Then,
$H_{(\alpha^{\prime},\epsilon^{\prime})} = (H, \triangleleft, \triangleright, \alpha\alpha^{\prime}, \epsilon\epsilon^{\prime})$
is a BiHom-superdialgebra.
\end{thm}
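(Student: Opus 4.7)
The plan is to verify, one by one, the seven axioms of Definition \ref{def2} for the 5-tuple $H_{(\alpha',\epsilon')} = (H,\triangleleft,\triangleright,\alpha\alpha',\epsilon\epsilon')$. I read the hypothesis ``commuting endomorphisms $\alpha', \epsilon'$'' in the standard Yau-twist sense: $\alpha'$ and $\epsilon'$ commute with each other and with $\alpha,\epsilon$, and each is multiplicative with respect to $\dashv$ and $\vdash$. Under these assumptions, axiom (i) is immediate from the pairwise commutation of $\alpha, \alpha', \epsilon, \epsilon'$.

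For axioms (ii) and (iii), the representative calculation is
\begin{align*}
(\alpha\alpha')(p \triangleleft q)
&= \alpha\alpha'\bigl(\alpha'(p)\dashv\epsilon'(q)\bigr)
= \alpha\alpha'^{2}(p)\dashv\alpha\alpha'\epsilon'(q)\\
&= \alpha'(\alpha\alpha'(p))\dashv\epsilon'(\alpha\alpha'(q))
= (\alpha\alpha')(p)\triangleleft(\alpha\alpha')(q),
\end{align*}
using the multiplicativity of $\alpha'$ and $\alpha$ with respect to $\dashv$ and the commutation relations. The corresponding computations for $\triangleright$ and for $\epsilon\epsilon'$ are identical in form.

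Axioms (iv)--(vii) each reduce to their unprimed counterparts by absorbing the twist. Taking (iv) as the template,
\begin{align*}
(p\triangleleft q)\triangleleft(\epsilon\epsilon')(r)
&= \alpha'\bigl(\alpha'(p)\dashv\epsilon'(q)\bigr)\dashv\epsilon'\epsilon\epsilon'(r)
= \bigl(\alpha'^{2}(p)\dashv\alpha'\epsilon'(q)\bigr)\dashv\epsilon\bigl(\epsilon'^{2}(r)\bigr),
\end{align*}
to which I apply the original axiom (iv) to obtain $\alpha\alpha'^{2}(p)\dashv\bigl(\alpha'\epsilon'(q)\dashv\epsilon'^{2}(r)\bigr)$. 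Unfolding $(\alpha\alpha')(p)\triangleleft(q\triangleleft r)$ produces the same expression after applying $\epsilon'\alpha' = \alpha'\epsilon'$ and $\alpha\alpha' = \alpha'\alpha$. The remaining three axioms follow in the same way, each invoking the correspondingly-labelled original axiom, with the appropriate choice of outer operation ($\dashv$ or $\vdash$) at each layer.

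I do not anticipate a substantive obstacle: the entire argument is careful bookkeeping, with the twist $\alpha'\otimes\epsilon'$ absorbed at each step by the multiplicativity of $\alpha',\epsilon'$ and the commutation of the four structure maps. The only point requiring care is to make explicit that the hypothesis on $\alpha',\epsilon'$ must include both commutation with $\alpha,\epsilon$ and multiplicativity on $\dashv,\vdash$ — without either condition, neither the multiplicativity of the new structure maps nor the reduction to the original axioms would go through.
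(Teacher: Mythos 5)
Your proof is correct and takes essentially the same route as the paper's: unfold the twisted products, use the multiplicativity of $\alpha',\epsilon'$ and the commutation relations to push the twists inward, and invoke the correspondingly-labelled original axiom. The paper writes out only one axiom (the mixed $\dashv/\vdash$ identity) and declares the rest similar, while you additionally make explicit the hypothesis the paper uses but leaves implicit in the phrase ``commuting endomorphisms,'' namely that $\alpha',\epsilon'$ are multiplicative for $\dashv,\vdash$ and commute with $\alpha,\epsilon$.
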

\begin{proof}
Here, we will only prove one of conditions in Definition \ref{def2}. For each $p,q,r\in H$, we obtain
\begin{align*}
(p \triangleleft q) \triangleleft \epsilon\epsilon^{\prime}(r) - \alpha\alpha^{\prime}(p) \triangleleft (q \triangleright r) &= \alpha^{\prime}(\alpha^{\prime}(p) \dashv \epsilon^{\prime}(q)) \dashv \epsilon^{\prime}\epsilon \epsilon^{\prime}(r) - \alpha^{\prime}\alpha\alpha^{\prime}(p) \dashv \epsilon^{\prime}(\alpha^{\prime}(q) \vdash\epsilon^{\prime}(r))\\
&= (\alpha^{\prime}\alpha^{\prime}(p) \dashv \alpha^{\prime}\epsilon^{\prime}(q)) \dashv \epsilon(\epsilon^{\prime}\epsilon^{\prime})(r) - \alpha\alpha^{\prime}\alpha^{\prime}(p) \dashv (\alpha^{\prime}\epsilon^{\prime}(q) \vdash \epsilon^{\prime}\epsilon^{\prime}(r))
\end{align*}
 Since $H$ is a BiHom-superdialgebra, by Using Definition \ref{def2} (v), we obtain
 \begin{align*}
(p \triangleleft q) \triangleleft \epsilon\epsilon^{\prime}(r) - \alpha\alpha^{\prime}(p) \triangleleft (q \triangleright r) &= \alpha(\alpha^{\prime}\alpha^{\prime}(p))\dashv (\alpha^{\prime}\epsilon^{\prime}(q) \vdash \epsilon^{\prime}\epsilon^{\prime}(r) )- \alpha\alpha^{\prime}\alpha^{\prime}(p) \dashv (\alpha^{\prime}\epsilon^{\prime}(q) \vdash \epsilon^{\prime}\epsilon^{\prime}(r)).
\end{align*}
  The left  side is equal to zero, therefore,  the proof of the part (iv) of Definition \ref{def2} is completed. By using the similar argument to other axioms, we prove the theorem.
\end{proof}
Consequently, the theorem derives the following sequence of corollaries.
\begin{cor}
Given the multiplicative BiHom-superdialgebra $(H,\dashv,\vdash,\alpha,\epsilon)$. Then, $(H, \dashv \circ(\alpha^{n} \otimes \epsilon^{n}), \vdash \circ(\alpha^{n} \otimes \epsilon^{n}), \alpha^{n+1}, \epsilon^{n+1})$ is  a multiplicative BiHom-superdialgebra as well.
\end{cor}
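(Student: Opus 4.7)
The plan is to derive this corollary directly from Theorem \ref{thm1} by taking the commuting endomorphism pair $(\alpha',\epsilon')=(\alpha^{n},\epsilon^{n})$. Under this specialization the twisted products in the theorem become exactly $\triangleleft=\dashv\circ(\alpha^{n}\otimes\epsilon^{n})$ and $\triangleright=\vdash\circ(\alpha^{n}\otimes\epsilon^{n})$, and the new structure maps become $\alpha\circ\alpha^{n}=\alpha^{n+1}$ and $\epsilon\circ\epsilon^{n}=\epsilon^{n+1}$, matching the statement to be proved.

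First I would verify the hypotheses of Theorem \ref{thm1}. The multiplicativity assumption tells us that $\alpha$ and $\epsilon$ are morphisms of the BiHom-superdialgebra $H$, so a straightforward induction on $n$ shows that $\alpha^{n}$ and $\epsilon^{n}$ are also morphisms, in particular even endomorphisms of $H$. The commutation relation $\alpha\circ\epsilon=\epsilon\circ\alpha$ from Definition \ref{def2}(i) propagates to all iterates, giving $\alpha^{n}\circ\epsilon^{n}=\epsilon^{n}\circ\alpha^{n}$. Hence $(\alpha^{n},\epsilon^{n})$ is a pair of commuting endomorphisms of $H$, and Theorem \ref{thm1} immediately yields a BiHom-superdialgebra structure $(H,\triangleleft,\triangleright,\alpha^{n+1},\epsilon^{n+1})$.

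It then remains only to check multiplicativity, which is not automatically delivered by Theorem \ref{thm1} and must be verified separately. I would do so by a short direct computation, exploiting that $\alpha^{n+1}$ is a morphism for the original product $\dashv$ and that $\alpha$ commutes with $\epsilon$:
\begin{align*}
\alpha^{n+1}(p\triangleleft q)=\alpha^{n+1}\bigl(\alpha^{n}(p)\dashv\epsilon^{n}(q)\bigr)=\alpha^{n}\bigl(\alpha^{n+1}(p)\bigr)\dashv\epsilon^{n}\bigl(\alpha^{n+1}(q)\bigr)=\alpha^{n+1}(p)\triangleleft\alpha^{n+1}(q),
\end{align*}
and the analogous identities for the pair $(\alpha^{n+1},\triangleright)$ and for $\epsilon^{n+1}$ with both twisted products follow by the same pattern.

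No step is genuinely difficult, and there is no real obstacle; the only mild care needed is to track the exponents of $\alpha$ and $\epsilon$ correctly and to invoke the commutativity $\alpha\epsilon=\epsilon\alpha$ at the right moment when pushing $\alpha^{n+1}$ past $\epsilon^{n}$, and symmetrically $\epsilon^{n+1}$ past $\alpha^{n}$.
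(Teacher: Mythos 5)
Your proposal is correct and follows essentially the same route as the paper, which simply specializes Theorem \ref{thm1} to $\alpha^{\prime}=\alpha^{n}$, $\epsilon^{\prime}=\epsilon^{n}$. You are in fact slightly more careful than the paper, since you explicitly verify that $\alpha^{n}$ and $\epsilon^{n}$ are commuting endomorphisms and that the twisted structure is again multiplicative, a point the paper's one-line proof leaves implicit.
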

\begin{proof}
It is clearly sufficient to take $\alpha^{\prime} = \alpha^{n}$ and $\epsilon^{\prime} = \epsilon^{n}$ in Theorem \ref{thm1}. Thus, this proves the corollary.
\end{proof}
\begin{cor}
Let $(H,\dashv,\vdash,\alpha)$  be a multiplicative Hom-superdialgebra and $\epsilon: H \longrightarrow H$ be an endomorphism of $H$.
Then, $(H, \dashv \circ(\alpha \otimes \epsilon), \vdash \circ(\alpha \otimes \epsilon), \alpha^{2},\epsilon)$
is also a BiHom-superdialgebra.
\end{cor}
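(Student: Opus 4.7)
The plan is to verify directly the seven axioms of Definition \ref{def2} for the 5-tuple $(H, \triangleleft, \triangleright, \alpha^2, \epsilon)$, where $p \triangleleft q := \alpha(p) \dashv \epsilon(q)$ and $p \triangleright q := \alpha(p) \vdash \epsilon(q)$. One might first attempt to realize this as an instance of Theorem \ref{thm1}, viewing the Hom-superdialgebra as a BiHom-superdialgebra with second structure map $\alpha$ (as in the example following the Remark); however, for standard choices of the twisting pair the fifth component produced by Theorem \ref{thm1} does not line up verbatim with the $\epsilon$ asserted here. A direct verification, mimicking the proof of Theorem \ref{thm1}, is therefore the cleanest route.

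First I would dispatch the multiplicativity axioms. Axiom (i) reduces to $\alpha^2 \circ \epsilon = \epsilon \circ \alpha^2$, which follows from the commutation $\alpha \epsilon = \epsilon \alpha$ implicit in $\epsilon$ being an endomorphism of the Hom-superdialgebra. Axioms (ii) and (iii), i.e.\ multiplicativity of $\alpha^2$ and $\epsilon$ with respect to $\triangleleft$ and $\triangleright$, expand via the definitions and reduce to multiplicativity of $\alpha$ and $\epsilon$ for $\dashv$ and $\vdash$ combined with the commutation relation $\alpha\epsilon = \epsilon\alpha$.

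The substantive step is verifying axioms (iv)--(vii). I would take (iv) as a template: after pushing $\alpha$ and $\epsilon$ through the products via multiplicativity, the two sides become
\[
(p \triangleleft q) \triangleleft \epsilon(r) = \bigl(\alpha^2(p) \dashv \alpha\epsilon(q)\bigr) \dashv \epsilon^2(r),
\]
\[
\alpha^2(p) \triangleleft (q \triangleleft r) = \alpha^3(p) \dashv \bigl(\alpha\epsilon(q) \dashv \epsilon^2(r)\bigr).
\]
One then invokes the Hom-superdialgebra relation $(x \dashv y) \dashv \alpha(z) = \alpha(x) \dashv (y \dashv z)$ (from axiom (ii) of the Hom-superdialgebra definition) with $x = \alpha(p)$, $y = \epsilon(q)$, and the $\alpha(z)$-slot carrying $\epsilon^2(r)$, to equate the two expressions. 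Axioms (v), (vi), (vii) are handled by exactly the same template, invoking respectively the Hom-superdialgebra relations that swap $\dashv$ and $\vdash$ in the outer, inner, or both positions.

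The main obstacle is the bookkeeping in axioms (iv)--(vii): one must track the exponents of $\alpha$ and $\epsilon$ accumulated on each tensor slot after expansion, and make sure the positions of $\alpha$ on the outer factor match the shape of the Hom-superdialgebra identity being applied, so that a single application closes each case. This is mechanical but must be carried out carefully, in direct parallel with the single case displayed in the proof of Theorem \ref{thm1}.
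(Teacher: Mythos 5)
Your instinct that this corollary does not drop out of Theorem \ref{thm1} is correct, but the direct verification you propose does not close the gap: the step in axiom (iv) where you invoke $(x \dashv y) \dashv \alpha(z) = \alpha(x) \dashv (y \dashv z)$ ``with the $\alpha(z)$-slot carrying $\epsilon^{2}(r)$'' is precisely where the argument fails. Writing $p \triangleleft q = \alpha(p)\dashv\epsilon(q)$ and expanding as you do (using multiplicativity and $\alpha\epsilon=\epsilon\alpha$) gives
\[
(p \triangleleft q) \triangleleft \epsilon(r) = \bigl(\alpha^{2}(p) \dashv \alpha\epsilon(q)\bigr) \dashv \epsilon^{2}(r),
\qquad
\alpha^{2}(p) \triangleleft (q \triangleleft r) = \alpha^{3}(p) \dashv \bigl(\alpha\epsilon(q) \dashv \epsilon^{2}(r)\bigr),
\]
and the only available Hom-superdialgebra identity, applied to the right-hand expression with $x=\alpha^{2}(p)$, $y=\alpha\epsilon(q)$, $z=\epsilon^{2}(r)$, converts it into $\bigl(\alpha^{2}(p)\dashv\alpha\epsilon(q)\bigr)\dashv\alpha\epsilon^{2}(r)$. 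The third slot carries $\alpha\epsilon^{2}(r)$, not $\epsilon^{2}(r)$; one cannot feed $\epsilon^{2}(r)$ into the $\alpha(z)$-slot without picking up this extra $\alpha$, and there is no hypothesis (such as invertibility of $\alpha$) that lets you absorb it. What your computation actually establishes is that the correct fifth component is $\alpha\epsilon$: the tuple $(H,\dashv\circ(\alpha\otimes\epsilon),\vdash\circ(\alpha\otimes\epsilon),\alpha^{2},\alpha\epsilon)$ is a BiHom-superdialgebra, namely the instance of Theorem \ref{thm1} obtained by regarding the Hom-superdialgebra as the BiHom-superdialgebra $(H,\dashv,\vdash,\alpha,\alpha)$ and twisting by $\alpha'=\alpha$, $\epsilon'=\epsilon$.

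For comparison, the paper's own proof is a one-line appeal to Theorem \ref{thm1} with ``$\alpha'=\alpha$, $\beta=Id_{H}$, $\epsilon'=\epsilon$'', which amounts to treating the Hom-superdialgebra as a BiHom-superdialgebra whose second structure map is $Id_{H}$. That reading is not licensed by the definitions: a Hom-superdialgebra satisfies $(p\dashv q)\dashv\alpha(r)=\alpha(p)\dashv(q\dashv r)$, which is the BiHom axiom with second structure map $\alpha$, not $Id_{H}$. So the printed statement appears to be in error, and your proposal, though more careful in spirit, cannot rescue it as stated; you should either correct the fifth component to $\alpha\epsilon$, or add the hypothesis that $\alpha$ is bijective and twist by $(\alpha,\alpha^{-1}\epsilon)$ instead.
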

\begin{proof}
By taking $\alpha^{\prime} = \alpha$, $\beta=Id_{H}$ and $\epsilon^{\prime}=\epsilon$ in Theorem \ref{thm1}, we prove the corollary.
\end{proof}
\begin{cor}
If $(H,\dashv,\vdash,\alpha,\epsilon)$  is a regular BiHom-superdialgebra, then $(H, \dashv \circ( \alpha^{-1} \otimes \epsilon^{-1} ), \vdash \circ(\alpha^{-1} \otimes \epsilon^{-1}))$ is a superdialgebra.
\end{cor}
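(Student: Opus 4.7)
The plan is to deduce the corollary as a direct instance of Theorem~\ref{thm1}, with the twisting endomorphisms chosen to be $\alpha' = \alpha^{-1}$ and $\epsilon' = \epsilon^{-1}$. Since the regularity hypothesis says $\alpha$ and $\epsilon$ are bijective, these inverses exist, and applying Theorem~\ref{thm1} will produce a BiHom-superdialgebra whose structure maps are $\alpha\alpha^{-1} = Id_H$ and $\epsilon\epsilon^{-1} = Id_H$. A BiHom-superdialgebra whose two twisting maps are both $Id_H$ is nothing but a superdialgebra in the sense of Definition~\ref{def1}, which is exactly the target conclusion.

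Before invoking Theorem~\ref{thm1}, I would verify its hypotheses for the pair $(\alpha^{-1},\epsilon^{-1})$. The commutativity $\alpha^{-1}\circ \epsilon^{-1} = \epsilon^{-1}\circ \alpha^{-1}$ is obtained by composing the identity $\alpha\circ \epsilon = \epsilon\circ \alpha$ (axiom (i) of Definition~\ref{def2}) on the left and right with $\alpha^{-1}$ and $\epsilon^{-1}$. The fact that $\alpha^{-1}$ and $\epsilon^{-1}$ are algebra endomorphisms of $(H,\dashv,\vdash)$ is the standard inversion argument: starting from $\alpha(p\dashv q) = \alpha(p)\dashv \alpha(q)$ and substituting $p \mapsto \alpha^{-1}(p)$, $q \mapsto \alpha^{-1}(q)$ gives $p\dashv q = \alpha\bigl(\alpha^{-1}(p)\dashv \alpha^{-1}(q)\bigr)$, so applying $\alpha^{-1}$ yields $\alpha^{-1}(p\dashv q) = \alpha^{-1}(p)\dashv \alpha^{-1}(q)$; the same trick handles $\vdash$ and $\epsilon^{-1}$.

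With the hypotheses of Theorem~\ref{thm1} satisfied, the conclusion gives a BiHom-superdialgebra
\[
H_{(\alpha^{-1},\epsilon^{-1})} = \bigl(H,\ \dashv\circ(\alpha^{-1}\otimes\epsilon^{-1}),\ \vdash\circ(\alpha^{-1}\otimes\epsilon^{-1}),\ Id_H,\ Id_H\bigr).
\]
Setting the twisting maps to $Id_H$ collapses axioms (ii) and (iii) of Definition~\ref{def2} trivially, while axioms (iv)--(vii) become the untwisted diassociative identities of Definition~\ref{def1}. This yields exactly a superdialgebra structure on $H$ with the stated products, as required.

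I expect no serious obstacle. The only subtle step is the routine verification that the inverses $\alpha^{-1}$ and $\epsilon^{-1}$ are still multiplicative with respect to both $\dashv$ and $\vdash$; beyond that, the corollary is a one-line specialization of Theorem~\ref{thm1} combined with the observation preceding it that identity twistings reduce a BiHom-superdialgebra to an ordinary superdialgebra.
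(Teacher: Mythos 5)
Your proposal is correct and follows exactly the paper's route: the paper's proof is the one-line specialization $\alpha' = \alpha^{-1}$, $\epsilon' = \epsilon^{-1}$ in Theorem~\ref{thm1}, with the structure maps collapsing to $Id_H$. Your additional verification that $\alpha^{-1}$ and $\epsilon^{-1}$ are commuting endomorphisms is a worthwhile detail the paper omits, but it does not change the argument.
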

\begin{proof}
The claim follows in the case where $\alpha^{\prime} = \alpha^{-1}$ and $\epsilon^{\prime} = \epsilon^{-1}$ of Theorem \ref{thm1}.
\end{proof}
\begin{cor}
Given a superdialgebra $(H,\dashv,\vdash)$  and two commuting homomorphisms $\alpha,\epsilon: H \longrightarrow H$. Then, $(H, \dashv \circ(\alpha \otimes \epsilon), \vdash \circ(\alpha \otimes \epsilon), \alpha, \epsilon)$ is a BiHom-superdialgebra.
\end{cor}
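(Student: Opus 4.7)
The plan is to realize this corollary as an immediate specialization of Theorem~\ref{thm1}. First, I would note that every superdialgebra $(H,\dashv,\vdash)$ can be viewed as a BiHom-superdialgebra with trivial structure maps, namely $(H,\dashv,\vdash,\mathrm{id}_H,\mathrm{id}_H)$. Indeed, setting $\alpha=\epsilon=\mathrm{id}_H$ in Definition~\ref{def2}, conditions (i)--(iii) are automatic and (iv)--(vii) reduce word-for-word to the three axioms of Definition~\ref{def1}. This observation is already recorded in the example following Definition~\ref{def2}.

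Next, I would apply Theorem~\ref{thm1} to this BiHom-superdialgebra, with the two commuting endomorphisms $\alpha'=\alpha$ and $\epsilon'=\epsilon$ given in the hypothesis. The hypotheses of the theorem are fulfilled: $\alpha$ and $\epsilon$ commute with each other by assumption, they commute trivially with $\mathrm{id}_H$, and being homomorphisms of the underlying superdialgebra they are in particular endomorphisms of the BiHom-superdialgebra structure $(H,\dashv,\vdash,\mathrm{id}_H,\mathrm{id}_H)$. The theorem's conclusion then yields that
$$H_{(\alpha,\epsilon)}=\bigl(H,\;\dashv\circ(\alpha\otimes\epsilon),\;\vdash\circ(\alpha\otimes\epsilon),\;\mathrm{id}_H\circ\alpha,\;\mathrm{id}_H\circ\epsilon\bigr)$$
is a BiHom-superdialgebra. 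Simplifying $\mathrm{id}_H\circ\alpha=\alpha$ and $\mathrm{id}_H\circ\epsilon=\epsilon$, we recover precisely the 5-tuple asserted in the statement.

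Because the argument is a pure substitution into an already-proven result, no axiom of Definition~\ref{def2} needs to be re-verified by hand. There is therefore no real obstacle; the only points worth spelling out are the two trivialities above, namely that the superdialgebra $(H,\dashv,\vdash)$ is a BiHom-superdialgebra with identity structure maps, and that composition with $\mathrm{id}_H$ absorbs into $\alpha$ and $\epsilon$. This compresses the corollary into two lines once Theorem~\ref{thm1} is available.
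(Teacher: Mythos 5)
Your proposal is correct and is essentially identical to the paper's own proof, which likewise obtains the corollary by viewing $(H,\dashv,\vdash)$ as a BiHom-superdialgebra with identity structure maps and then specializing Theorem~\ref{thm1} with $\alpha'=\alpha$ and $\epsilon'=\epsilon$. Your write-up is in fact slightly more careful, since you explicitly record why the hypotheses of the theorem are satisfied and why $\mathrm{id}_H\circ\alpha=\alpha$ gives the stated structure maps.
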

\begin{proof}
By taking $\alpha = \epsilon=Id_{H}$ and replacing $\alpha^{\prime}$ by $\alpha$ and $\epsilon^{\prime}$ by $\epsilon$ in Theorem \ref{thm1}, we obtain the result as required.
\end{proof}


\begin{defn}
Let $(H, \dashv, \vdash, \alpha, \epsilon)$ be a BiHom-superdialgebra and $S$ a subset of $H$.   If $S$ is stable under $\alpha$ and $\epsilon$ and $p \dashv q, p \vdash q \in S$ for every $p, q \in S$,  $S$ is called a
BiHom-supersubalgebra of $H$.
\end{defn}

\begin{defn}
Given a BiHom-superdialgebra $(H, \dashv, \vdash, \alpha, \epsilon)$ and  a BiHom-supersubalgebra $T$ of $H$. For every $p \in T, q \in H$,  we say $T$ is a left BiHom-ideal of $H$ if we have  $p \dashv q, p \vdash q\in T$. If $q \dashv p$ and $q \vdash p$ are in $T$, then $T$ is called a right BiHom-ideal of $H$ and $T$ is said to be
 a two sided BiHom-ideal of $H$ if $p \dashv q \;, \; q \dashv p, \;p \vdash q \;, \; q \vdash p  \in T$.
\end{defn}
\begin{ex}
Clearly, $T = \{0\}$ and $T = H$ are two-sided BiHom-ideals.
 If $\theta : H_{1} \longrightarrow H_{2}$ is a morphism of BiHom-superdialgebras, then the kernel $Ker\theta$ is a two sided BiHom-ideal in $H_{1}$ and the image $Im\theta$ is a BiHom-supersubalgebra of $H_{2}$.  Moreover,
if $T_{1}$ and $T_{2}$ are two sided BiHom-ideals of $H$, then  $T_{1} + T_{2}$ is two sided BiHom-ideal as well.
\end{ex}
\begin{prop}
Given a BiHom-superdialgebra $(H, \dashv, \vdash, \alpha, \epsilon)$  and  a two sided BiHom-ideal $T$ of   $(H, \dashv, \vdash, \alpha, \epsilon)$.
Then, $(H/T,  \overline{\dashv},  \overline{\vdash},   \overline{\alpha},   \overline{\epsilon})$ is a BiHom-superdialgebra where
$$  \overline{\alpha}( \overline{p}) =  \overline{\alpha(p)}, \quad  \overline{\epsilon}( \overline{p}) =  \overline{\epsilon(p)}  $$ and $$\overline{p} \; \overline{\dashv} \; \overline{q} =  \overline{p \dashv q},\quad \overline{p}\;  \overline{\vdash} \; \overline{q} =  \overline{p \vdash q}$$
for all $\overline{p}, \overline{q} \in H/T$.
\end{prop}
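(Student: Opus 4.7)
The plan is to verify the statement in two stages: first check that each of the four operations $\overline{\alpha}$, $\overline{\epsilon}$, $\overline{\dashv}$, $\overline{\vdash}$ is well defined on $H/T$, and then transfer the seven axioms of Definition \ref{def2} from $H$ to the quotient by projecting each identity through the canonical surjection.

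For the well-definedness, suppose $\overline{p}=\overline{p'}$, so $p-p'\in T$. Since $T$ is a BiHom-supersubalgebra, it is stable under $\alpha$, which gives $\alpha(p)-\alpha(p')=\alpha(p-p')\in T$ and hence $\overline{\alpha(p)}=\overline{\alpha(p')}$; the same works for $\overline{\epsilon}$. For the two products, whenever $\overline{p}=\overline{p'}$ and $\overline{q}=\overline{q'}$ I use the expansion
$$p\dashv q - p'\dashv q' \;=\; (p-p')\dashv q \;+\; p'\dashv (q-q').$$
The left-ideal property of $T$ absorbs the first summand and the right-ideal property absorbs the second; the analogous computation applies to $\vdash$. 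This is the only place in the proof where the two-sided hypothesis on $T$ is genuinely used.

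For the axioms, each one descends termwise. Axiom (i) becomes $\overline{\alpha}\circ\overline{\epsilon}(\overline{p})=\overline{\alpha\epsilon(p)}=\overline{\epsilon\alpha(p)}=\overline{\epsilon}\circ\overline{\alpha}(\overline{p})$. Axioms (ii)–(iii), the multiplicativity of $\alpha,\epsilon$ for both products, unfold in the style $\overline{\alpha}(\overline{p}\,\overline{\dashv}\,\overline{q})=\overline{\alpha(p\dashv q)}=\overline{\alpha(p)\dashv\alpha(q)}=\overline{\alpha}(\overline{p})\,\overline{\dashv}\,\overline{\alpha}(\overline{q})$. For axioms (iv)–(vii), I will write, for instance in case (iv),
$$(\overline{p}\,\overline{\dashv}\,\overline{q})\,\overline{\dashv}\,\overline{\epsilon}(\overline{r}) \;=\; \overline{(p\dashv q)\dashv\epsilon(r)} \;=\; \overline{\alpha(p)\dashv(q\dashv r)} \;=\; \overline{\alpha}(\overline{p})\,\overline{\dashv}\,(\overline{q}\,\overline{\dashv}\,\overline{r}),$$
and the three remaining mixed BiHom-associativity relations (v), (vi), (vii) are handled identically by substituting the corresponding identity from Definition \ref{def2} in the middle step.

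There is no serious obstacle here: once the two-sided ideal hypothesis has been used to guarantee that the quotient operations are well defined, every axiom is a one-line consequence of the corresponding axiom in $H$ combined with the definition of the quotient structure. The only care required is the bookkeeping that ensures $T$ absorbs both summands of $p\dashv q - p'\dashv q'$ (and the analogous expression for $\vdash$), which is precisely what the two-sided hypothesis provides.
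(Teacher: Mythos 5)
Your proof is correct and follows essentially the same route as the paper: each axiom of Definition \ref{def2} descends to the quotient by substituting the corresponding identity in $H$ under the canonical projection (the paper carries out only axiom (vii) and declares the rest ``similar''). Your additional verification that $\overline{\alpha}$, $\overline{\epsilon}$, $\overline{\dashv}$, $\overline{\vdash}$ are well defined --- via the decomposition $p\dashv q - p'\dashv q' = (p-p')\dashv q + p'\dashv(q-q')$, which is the only place the two-sided ideal hypothesis is actually used --- is a step the paper's proof omits entirely, so your write-up is the more complete of the two.
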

\begin{proof}
Here, We only prove right superassociativity. Similarly,  the others are  proved. For every $\overline{p}, \overline{q}, \overline{r} \in H/T$, we obtain
\begin{align*}
(\overline{p}\overline{\vdash} \overline{q})\overline{\vdash} \overline{\epsilon}(\overline{r}) -  \overline{\alpha}(\overline{p})\overline{\vdash}(\overline{q}\overline{\vdash} \overline{r}) &=\overline{(p \vdash q)} \overline{\vdash} \overline{\epsilon(r)} - \overline{ \alpha(p)} \overline{\vdash}\overline{ (q \vdash r)}\\
&= \overline{(p \vdash q) \vdash \epsilon(r) -  \alpha(p) \vdash (q \vdash r)}\quad \text{(by Definition \ref{def2} (vii))}\\
&= \overline{0}.
\end{align*}
Then, $(H/T, \overline{\dashv}, \overline{\vdash}, \overline{\alpha}, \overline{\epsilon})$ is BiHom-superdialgebra.
\end{proof}
\section{ Derivation of BiHom-superdialgebra}

Some significant concepts corresponding to derivations of BiHom-superdialgebras will be presented in this section.

\begin{defn}
Let $(H,\mu,\alpha,\epsilon)$ be a BiHom-superalgebra. A linear maping $d:H \longrightarrow H$ is said to be an
$\alpha^{m}\epsilon^{n}$-derivation of $(H,\mu,\alpha,\epsilon)$ if it holds
\begin{align*}
\alpha \circ d&= d \circ\alpha  \quad \text{and} \quad  d \circ \epsilon = \epsilon \circ d, \\
d \circ \mu(p, q) &= \mu(d(p), \alpha^{m}\epsilon^{n}(q)) +(-1)^{|p||d|} \mu(\alpha^{m}\epsilon^{n}(p), d(q))
\end{align*}
for every $p,q\in H$ and for each integers $m,n$.
\end{defn}
\begin{defn}
A linear maping $d:H \longrightarrow H$ on a BiHom-superalgebra is called a differential if\\
$$d(p.q) = dp.q +(-1)^{|p||d|} p.dq, \quad \text{for every} \quad p, q \in H,\quad d^{2} = 0$$ and
$$d \circ\alpha = \alpha \circ d \quad d \circ \epsilon= \epsilon \circ d.$$
\end{defn}
\begin{prop}
Given a differential BiHom-superalgebra $(H,.,\alpha,\epsilon,d)$  and the products $\dashv$ and $\vdash$ on $H$  by
\begin{align*}
p \dashv q= \alpha(p) . dq \quad and \quad p \vdash q = dp . \epsilon(q).
\end{align*}
Then $(H,\dashv,\vdash,\alpha,\epsilon)$ is a BiHom-superdialgebra where $\alpha$ and $\epsilon$ are idempotant structure maps.
\end{prop}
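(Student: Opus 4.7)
The plan is to verify, axiom by axiom, that $(H,\dashv,\vdash,\alpha,\epsilon)$ satisfies Definition \ref{def2}. The tools I will draw on are the BiHom-associativity $\alpha(p)\cdot(q\cdot r)=(p\cdot q)\cdot\epsilon(r)$ of the underlying BiHom-superalgebra, multiplicativity of $\alpha$ and $\epsilon$ with respect to $\cdot$, the commutations $\alpha\epsilon=\epsilon\alpha$, $d\alpha=\alpha d$, $d\epsilon=\epsilon d$, the super Leibniz rule, the nilpotency $d^{2}=0$, and, crucially, the standing hypothesis that $\alpha$ and $\epsilon$ are idempotent.

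Axioms (i)--(iii) should be essentially immediate. Condition (i) is inherited from the underlying data. For (ii) and (iii) I would unfold directly, e.g.\ $\alpha(p\dashv q)=\alpha(\alpha(p)\cdot dq)=\alpha^{2}(p)\cdot\alpha(dq)=\alpha(p)\cdot d\alpha(q)=\alpha(p)\dashv\alpha(q)$, using multiplicativity of $\alpha$ for $\cdot$, the idempotency $\alpha^{2}=\alpha$, and $d\alpha=\alpha d$; the three remaining identities (for $\vdash$ and for $\epsilon$) follow by the same pattern.

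The real content sits in the four dialgebra-associativity axioms (iv)--(vii). For each I would run the same three-step recipe: unfold both sides using the defining formulas $p\dashv q=\alpha(p)\cdot dq$ and $p\vdash q=dp\cdot\epsilon(q)$; use idempotency to collapse $\alpha^{2}$ and $\epsilon^{2}$ back to $\alpha$ and $\epsilon$ and push $d$ across $\alpha,\epsilon$; and apply the super Leibniz rule to the one differentiated factor that appears, observing that the resulting $d^{2}$-term vanishes by the differential property. What is left on both sides is then an identity in the original product $\cdot$ which is precisely BiHom-associativity. As a concrete sample, in (iv) the left side reduces to $(\alpha(p)\cdot d\alpha(q))\cdot\epsilon(dr)$, while the right side, after Leibniz kills the $\alpha(q)\cdot d^{2}r$ contribution, becomes $\alpha(\alpha(p))\cdot(d\alpha(q)\cdot dr)=\alpha(p)\cdot(d\alpha(q)\cdot dr)$, and the two sides match via BiHom-associativity of $\cdot$.

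The main obstacle I foresee is sign bookkeeping in (vi) and (vii), where the Koszul factor $(-1)^{|p||d|}$ produced by the super Leibniz rule is nontrivial; one must check that the signs that survive on the two sides of the identity agree after using $|dp|=|p|+|d|$ and the commutation of parity with $\alpha$ and $\epsilon$. The idempotency hypothesis is what makes the whole scheme go through: without $\alpha^{2}=\alpha$ and $\epsilon^{2}=\epsilon$, the iterated structure maps produced by the expansions would not collapse to the single $\alpha$ and $\epsilon$ that BiHom-associativity expects, and the recipe above would fail to match the two sides.
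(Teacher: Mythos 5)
Your proposal is correct and follows essentially the same route as the paper: unfold $\dashv$ and $\vdash$ into the underlying product, apply the super Leibniz rule so that $d^{2}=0$ kills the extra term, commute $d$ past $\alpha,\epsilon$, collapse $\alpha^{2},\epsilon^{2}$ by idempotency, and finish with BiHom-associativity of $\cdot$ (the paper itself only writes out axioms (iv) and (vii) and leaves the rest "similar", just as you sample one case). Your remark about the Koszul sign in (vi)--(vii) is a genuine subtlety that the paper's own computation silently suppresses, so flagging it is a point in your favor rather than a gap.
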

\begin{proof}
It is easy to show the BiHom-superassociativity. We have
\begin{align*}
\alpha(p) \dashv (q \dashv r) &= \alpha(p) \dashv (\alpha(q).d(r))\\
&= \alpha (\alpha(p)).d(\alpha(q).d(r))\\
&=\alpha (\alpha(p)).(d(\alpha(q)).d(r)+\alpha(q)d^2(r))\\
&=\alpha (\alpha(p)).(d(\alpha(q)).d(r))\\
&= (p \dashv q) \dashv \epsilon(r)
\end{align*}
for any $p,q,r \in H$. By using a similar way, we obtain
\begin{align*}
\alpha(p) \vdash (q \vdash r) &= \alpha(p) \vdash (d(q).\epsilon(r))\\
&=d(\alpha(p)).\epsilon(d(q).\epsilon(r))\\
&=\alpha(d(p)).(\epsilon(d(q)).\epsilon^2(r))\\
&=(d(p).\epsilon(d(q))).\epsilon^2(r)\\
&=(p\vdash q)\vdash \epsilon(r).
\end{align*}
\end{proof}
\begin{prop}
Let $(H,\dashv,\vdash,\alpha,\epsilon)$ be a BiHom-superdialgebra such that $\alpha(r) = \epsilon(r)$ for all  $r \in H$. For any $p,q \in H$, we describe the linear maping $ad^{(\alpha,\epsilon)}_{r} : H \longrightarrow H$ by
$$ad^{(\alpha,\epsilon)}_{r}(p) = p \dashv \epsilon(p) - (-1)^{|p||r|}\alpha(r) \vdash x.$$
Then $ad_{r}^{(\alpha,\epsilon)}$ is a derivation of $H$.
\end{prop}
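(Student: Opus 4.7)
The plan is to verify directly the three clauses in the definition of a derivation of a BiHom-superdialgebra: commutation with $\alpha$, commutation with $\epsilon$, and a graded Leibniz rule for each of the two products $\dashv$ and $\vdash$. Reading the displayed formula with the natural intended meaning, I take $d := ad_r^{(\alpha,\epsilon)}$ to act on a homogeneous element $p$ by $d(p) = p \dashv \epsilon(r) - (-1)^{|p||r|}\alpha(r) \vdash p$, so that $d$ has parity $|r|$.

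First, I would establish $d \circ \alpha = \alpha \circ d$ and $d \circ \epsilon = \epsilon \circ d$. These are immediate from the multiplicativity of $\alpha$ and $\epsilon$ on both products (Definition \ref{def2}(ii)--(iii)) together with their commutation (Definition \ref{def2}(i)); the hypothesis $\alpha = \epsilon$ makes the two verifications identical.

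Next, for the Leibniz identity on $\dashv$, I would expand
\[
d(p \dashv q) = (p \dashv q) \dashv \epsilon(r) - (-1)^{(|p|+|q|)|r|}\alpha(r) \vdash (p \dashv q),
\]
and rewrite the first term via axiom (iv) and the second via axiom (v) of Definition \ref{def2}. On the other side, the proposed Leibniz expression
\[
d(p) \dashv \alpha\epsilon(q) + (-1)^{|p||r|}\alpha\epsilon(p) \dashv d(q)
\]
splits into four summands when $d(p)$ and $d(q)$ are expanded. Applying axioms (iv) and (v) again to these four summands, and using $\alpha = \epsilon$ to match the iterates of the structure maps, should produce exactly the same two surviving terms as above. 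This simultaneously identifies $d$ as an $\alpha\epsilon$-derivation (the case $m = n = 1$ of the earlier definition).

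The Leibniz identity on $\vdash$ is handled symmetrically, using axioms (vi) and (vii) in place of (iv) and (v); the sign pattern is identical because the only combinatorial factor that arises is $(-1)^{|p||r|}$ from moving $r$ across $p$. The main obstacle is disciplined bookkeeping: correctly pairing each triple product with the right axiom of Definition \ref{def2} and checking that the middle cross terms cancel in pairs. The hypothesis $\alpha(r) = \epsilon(r)$ is essential here, since it removes what would otherwise be a mismatch of structure maps between the two summands that define $d(p)$, and it is precisely this collapse that lets the two halves $p \dashv \epsilon(r)$ and $\alpha(r) \vdash p$ be differentiated using the same power of $\alpha\epsilon$ on the right-hand side.
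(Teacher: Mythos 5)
Your plan targets a different identity than the paper verifies: you propose to check the $\alpha\epsilon$-twisted Leibniz rule $d(p\dashv q)=d(p)\dashv\alpha\epsilon(q)+(-1)^{|p||r|}\alpha\epsilon(p)\dashv d(q)$, whereas the paper's proof is a direct computation of the \emph{untwisted} identity $ad_{r}(p)\dashv q+p\dashv ad_{r}(q)=ad_{r}(p\dashv q)$ for the product $\dashv$ only (it never checks $\vdash$ or the commutation with $\alpha,\epsilon$, which you rightly include). The real problem is that the one step you do not carry out --- ``applying axioms (iv) and (v) again to these four summands \dots should produce exactly the same two surviving terms'' --- is the entire content of the proposition, and as described it does not go through. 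Expanding your right-hand side gives four summands; the two cross terms that must cancel are $(p\dashv\epsilon(r))\dashv\alpha\epsilon(q)$, with coefficient $+1$, and $\alpha\epsilon(p)\dashv(\alpha(r)\vdash q)$, with coefficient $-(-1)^{(|p|+|q|)|r|}$ after the sign $-(-1)^{|q||r|}$ hidden inside $d(q)$ is combined with the Leibniz sign $(-1)^{|p||r|}$. So your assertion that ``the only combinatorial factor that arises is $(-1)^{|p||r|}$'' is false: a residual $(-1)^{|q||r|}$ survives and is not absorbed by anything, exactly as in the classical super computation with this placement of the sign. Moreover, axioms (iv) and (v) alone cannot identify these two terms even up to sign: rewriting the first by (iv) produces $\alpha(p)\dashv(\epsilon(r)\dashv\alpha(q))$ while rewriting the second by (vi) produces $(\epsilon(p)\dashv\alpha(r))\vdash\epsilon(q)$, and matching them requires the ``inner operation is irrelevant in a bar position'' exchange (the BiHom analogue of axiom (ii) of Definition \ref{def1}, i.e.\ the combination of (iv) and (vi)), which your plan never invokes. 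Finally, the two terms that are supposed to survive come out as $(\alpha(p)\dashv q)\dashv\epsilon\alpha\epsilon(r)$ and $\alpha^{2}(r)\vdash(p\dashv\alpha(q))$ (up to relabeling), carrying one more power of the structure maps than the target $(p\dashv q)\dashv\epsilon(r)-(-1)^{(|p|+|q|)|r|}\alpha(r)\vdash(p\dashv q)$; the hypothesis $\alpha=\epsilon$ does not remove this mismatch.

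For fairness of comparison: the paper's own computation suffers from the same difficulties and conceals them by applying the axioms as if the structure maps were absent (e.g.\ it rewrites $(p\dashv\epsilon(r))\dashv q$ as $p\dashv(\epsilon(r)\dashv q)$ with no $\alpha$ on $p$ and no $\epsilon$ on $q$, and silently replaces $(-1)^{|q||r|}$ by $(-1)^{|p||r|}$ at the cancellation step). So you should not expect to reach the paper's conclusion by honest bookkeeping with Definition \ref{def2} as stated; a correct version requires either renormalizing $ad_{r}$ with powers of the structure maps, or placing the Leibniz sign on the other summand (the ``right-acting'' convention $ad_{r}(p\dashv q)=(-1)^{|q||r|}ad_{r}(p)\dashv q+p\dashv ad_{r}(q)$, under which the classical super case does close). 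As written, your proposal has a genuine gap at its central step.
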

\begin{proof}
We have
\begin{align*}
ad^{(\alpha,\epsilon)}_{r}(p) \dashv q + p \dashv ad^{(\alpha,\epsilon)}_{r}(q)
&= (p \dashv \epsilon(r) - (-1)^{|p||r|} \alpha(r) \vdash p) \dashv q + p \dashv (q \dashv \epsilon(r) - (-1)^{|q||r|} \alpha(r) \vdash q)\\
&= (p \dashv \epsilon(r)) \dashv q - (-1)^{|p||r|} (\alpha(r) \vdash p)\dashv q  \\
&+ p \dashv (q \dashv \epsilon(r)) - (-1)^{|q||p|} p \dashv (\alpha(r) \vdash q)\\
&= p \dashv (\epsilon(r) \dashv q )- (-1)^{|p||r|} \alpha(r) \vdash (p\dashv q)  \\
&+ (p \dashv q )\dashv \epsilon(r) - (-1)^{|q||r|} p \dashv (\alpha(r) \vdash q)\\
&= (p \dashv q) \dashv \epsilon(r)- (-1)^{|p||r|} \alpha(r) \vdash (p \dashv q)\\
&= ad^{(\alpha,\epsilon)}_{r} (p \dashv q).
\end{align*}
As a result, $ad_{r}^{(\alpha,\epsilon)}$  is a derivation.
\end{proof}

\begin{defn}
Given a BiHom-superdialgebra $(H,\dashv,\vdash,\alpha,\epsilon)$. A linear maping $d : H \longrightarrow H$ is called a $\alpha^{m}\epsilon^{n}$-derivation of $H$, if it holds
\begin{align*}
&\text{(i)}\quad d\circ\alpha = \alpha \circ d, \epsilon \circ d=\quad\quad d\circ\epsilon ,\\
&\text{(ii)}\quad d(p\dashv q) = \alpha^{m}\epsilon^{n}(p)\dashv d(q) + (-1)^{|p||d|}d(p) \dashv\alpha^{m}\epsilon^{n}(q),\\
&\text{(iii)}\quad d(p\vdash q) = \alpha^{m}\epsilon^{n}(p)\vdash d(q) + (-1)^{|p||d|}d(p) \vdash \alpha^{m}\epsilon^{n}(q)
\end{align*}
for each $ p,q \in H$ and for each integers $m,n$.
\end{defn}
The set of all $\alpha^{m}\epsilon^{n}$-derivation of $H$ is represented by $Der_{\alpha^{m}\epsilon^{n}}(H)$ and  the set of all derivation on BiHom-superdialgebra is represented by $$Der(H) = \bigoplus_{m,n\ge0}Der_{\alpha^{m}\epsilon^{n}}(H).$$\\
We define, for any $d,d^{\prime} \in Der(H)$, the even  bilinear maping $[,] : Der(H)\times Der(H) \longrightarrow Der(H)$ by $$[d,d^{\prime}]=d\circ d^{\prime}-(-1)^{|d||d^{\prime}|}d^{\prime}\circ d.$$
\begin{prop}\label{prop1}
For each $d\in (Der_{\alpha^{m}\epsilon^{n}}(H))_{i}$ and $d^{\prime} \in (Der_{\alpha^{m}\epsilon^{n}}(H))_{j}$, then $$[d,d^{\prime}] \in Der_{\alpha^{m+s}\epsilon^{n+t}} (H)_{|d|+|d^{\prime}|},$$
where $m+s,n+t \geq -1 \: \text{and} \:(i,j)\in \mathbb{Z}^{2}_{2}$.
\end{prop}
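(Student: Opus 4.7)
My plan is to verify the three derivation axioms for the graded commutator $[d,d'] = d\circ d' - (-1)^{|d||d'|} d'\circ d$ in turn, relying heavily on the fact that $d$ and $d'$ each commute with the structure maps $\alpha$ and $\epsilon$. First, I observe that $[d,d']$ has parity $|d|+|d'| = i+j$ directly from its definition. Axiom (i) is then immediate: since $d\circ\alpha = \alpha\circ d$ and $d'\circ\alpha = \alpha\circ d'$, I can pull $\alpha$ through each factor of each summand of $[d,d']$, and likewise for $\epsilon$. This also guarantees the identity $d\circ \alpha^s\epsilon^t = \alpha^s\epsilon^t\circ d$, which I will need repeatedly below, together with $\alpha^m\epsilon^n\circ\alpha^s\epsilon^t = \alpha^{m+s}\epsilon^{n+t}$ coming from $\alpha\circ\epsilon = \epsilon\circ\alpha$.

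For axiom (ii), I want to show $[d,d'](p\dashv q) = \alpha^{m+s}\epsilon^{n+t}(p)\dashv [d,d'](q) + (-1)^{|p|(i+j)} [d,d'](p)\dashv \alpha^{m+s}\epsilon^{n+t}(q)$. The approach is to first expand $d'(p\dashv q)$ using the $\alpha^s\epsilon^t$-derivation property of $d'$ into two terms, then apply $d$ to each using the $\alpha^m\epsilon^n$-derivation property (keeping in mind that the parity of $d'(p)$ is $|p|+|d'|$, which modifies the sign when $d$ is applied to it). This yields four terms in $d(d'(p\dashv q))$. Expanding $d'(d(p\dashv q))$ by the same procedure, with the roles of $d$ and $d'$ swapped, yields another four terms. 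Finally I subtract $(-1)^{|d||d'|}$ times the second expansion from the first.

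The resulting expression has two distinct flavors of terms. The \emph{outer} terms (where both derivations act on the same argument, either on $p$ alone or on $q$ alone) combine so that the pair with $q$ as the active argument telescopes to $\alpha^{m+s}\epsilon^{n+t}(p)\dashv [d,d'](q)$, and the pair with $p$ as the active argument telescopes to $(-1)^{|p|(i+j)}[d,d'](p)\dashv \alpha^{m+s}\epsilon^{n+t}(q)$ after rearranging $d\circ d' - (-1)^{|d||d'|} d'\circ d = [d,d']$. The \emph{mixed} cross terms (where one derivation acts on $p$ and the other on $q$) cancel in pairs, since swapping $d\leftrightarrow d'$ between the two expansions produces exactly the compensating sign $(-1)^{|d||d'|}$. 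Axiom (iii) is proved by the same calculation with $\vdash$ in place of $\dashv$: the derivation rule has identical shape for both products, and so does the BiHom-superdialgebra structure as far as this argument is concerned.

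The main obstacle is sign bookkeeping. The parity-graded Leibniz rule couples the degree $|p|$ with the degrees $|d|$ and $|d'|$, and when $d$ is applied to $d'(p)$ the relevant parity shifts to $|p|+|d'|$, which is easy to misplace. I will organize the verification by writing out the four terms in each of $d(d'(p\dashv q))$ and $d'(d(p\dashv q))$ in the same order and checking, term by term, that the outer pairs produce the claimed Leibniz expression and that each mixed term is killed by its partner. The hypothesis $m+s,\,n+t\geq -1$ plays only the structural role of ensuring that $\alpha^{m+s}\epsilon^{n+t}$ is a legitimate structure twist in the resulting derivation.
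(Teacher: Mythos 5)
Your proposal is correct and follows essentially the same route as the paper's own proof: a direct expansion of the double graded Leibniz rule for $d\circ d'$ and $d'\circ d$, cancellation of the mixed cross terms against each other via the sign $(-1)^{|d||d'|}$, collection of the outer terms into the Leibniz rule for $[d,d']$ with twist $\alpha^{m+s}\epsilon^{n+t}$, and verification that $[d,d']$ commutes with $\alpha$ and $\epsilon$. The only cosmetic difference is that the paper runs the computation once with a generic product $\mu$ and then specializes to $\dashv$ and $\vdash$, whereas you treat $\dashv$ explicitly and note that $\vdash$ is identical.
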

\begin{proof}
 For each $p,q \in H$, we have
\begin{align*}
[d,d^{\prime}](\mu(p,q))&= (d\circ d^{\prime}-(-1)^{|d||d^{\prime}|}d^{\prime}\circ d)(\mu(p,q))\\
&=d \circ d^{\prime}\mu(p,q) -(-1)^{|d||d^{\prime}|}d^{\prime}\circ d\mu(p,q)\\
&= d(\mu(d^{\prime}(p),\alpha^{s}\epsilon^{t}(q)) + (-1)^{|p||d^{\prime}|}\mu(\alpha^{s}\epsilon^{t}(p),d^{\prime}(q))
-(-1)^{|d||d^{\prime}|}d^{\prime}(\mu(d(p),\alpha^{k}\epsilon^{l}(q))\\
&+(-1)^{|d||p|}\mu(\alpha^{s}\epsilon^{t}(p),d(q)))\\
&=\mu(d\circ d^{\prime}(p),\alpha^{m}\epsilon^{n}\alpha^{s}\epsilon^{t}(q))
+ (-1)^{|d|(|d^{\prime}|+|p|)}\mu(\alpha^{m}\epsilon^{n}d^{\prime}(p),d(\alpha^{s}\epsilon^{t}(q)))\\
&+(-1)^{|p||d^{\prime}|}\mu(d(\alpha^{s}\epsilon^{t}(p),\alpha^{m}\epsilon^{n}d^{\prime}(q)))
+(-1)^{|p||d^{\prime}|+|p||d|}\mu(\alpha^{m}\epsilon^{n}\alpha^{s}\epsilon^{t}(p),d\circ d^{\prime}(q))) \\
&-(-1)^{|d||d^{\prime}|}\mu(d^{\prime}\circ d(p), \alpha^{s}\epsilon^{t}\alpha^{m}\epsilon^{n}(q)))
-(-1)^{|p||d^{\prime}|}\mu(\alpha^{s}\epsilon^{t}d(p),d^{\prime}\alpha^{m}\epsilon^{n}(q))\\
&-(-1)^{|d||d^{\prime}|+|p||d|}\mu(d^{\prime}(\alpha^{m}\epsilon^{n}(p),\alpha^{s}\epsilon^{t}d(q))
-(-1)^{|d||d^{\prime}|+|p||d|+|d||d^{\prime}|}\mu(\alpha^{s}\epsilon^{t}(p)\alpha^{m}\epsilon^{n}(p),d^{\prime}\circ d(q)).
\end{align*}
Since any two of maps $d, d^{\prime}, \alpha,\epsilon$ commute, we have
\begin{align*}
&d^{\prime}\circ\epsilon^{n}=\epsilon^{n}\circ d^{\prime},\quad
d^{\prime}\circ\alpha^{m}=\alpha^{m}\circ d^{\prime},\\
&d\circ\epsilon^{t}=\epsilon^{t}\circ d, \quad d\circ\alpha^{s}=\alpha^{s}\circ d.
\end{align*}
Therefore, we have
$$[d,d^{\prime}](\mu(p,q))=\mu([d,d^{\prime}](p),\alpha^{m+s}\epsilon^{n+t}(q))+(-1)^{|p|(|d^{\prime}|+|d|)}\mu(\alpha^{m+s}\beta^{n+t}(p),[d,d^{\prime}](q)).$$\\
In addition, it is easy to see that
\begin{align*}
[d,d^{\prime}]\circ \alpha&=(d\circ d^{\prime}-(-1)^{|d||d^{\prime}|}d^{\prime}\circ d)\circ \alpha\\
&=d\circ d^{\prime}\circ \alpha-(-1)^{|d||d^{\prime}|}d^{\prime}\circ d\circ \alpha\\
&=\alpha \circ d \circ d^{\prime}-(-1)^{|d||d^{\prime}|}\alpha \circ d^{\prime}\circ d\\
&= \alpha\circ[d,d^{\prime}]
\end{align*}
and
\begin{align*}
[d,d^{\prime}]\circ \epsilon&=(d\circ d^{\prime}-(-1)^{|d||d^{\prime}|}d^{\prime}\circ d)\circ\epsilon\\
&=d\circ d^{\prime}\circ \epsilon-(-1)^{|d||d^{\prime}|}d^{\prime}\circ d\circ \epsilon\\
&=\epsilon\circ d\circ d^{\prime}-(-1)^{|d||d^{\prime}|}\epsilon\circ d^{\prime}\circ d\\
&=\epsilon\circ[d,d^{\prime}]
\end{align*}
which occurs that
$$[d,d^{\prime}] \in (Der_{\alpha^{m+s},\epsilon^{n+t}}(D))_{(|d||d^{\prime}|)}$$ by taking $\mu=\dashv$ and $\mu=\vdash$ respectively.\\
Consequently, for each integers $k$ and $l$, represent by $$Der(H) = (\bigoplus_{m,n}Der_{\alpha^{m},\epsilon^{n}}(H))_{0}+(\bigoplus_{m,n}Der_{\alpha^{m},\epsilon^{n}}(H))_{1}.$$
\end{proof}

\begin{defn}
Let $(H,\dashv,\vdash,\alpha,\epsilon)$ be a BiHom-superdialgebra and $\gamma,\delta,\lambda$ elements in $\mathbb{C}$. A linear maping $d : H \longrightarrow H$ is a generalized $\alpha^{m}\epsilon^{n}$-derivation
or a $(\gamma,\delta,\lambda)-(\alpha^{m}\epsilon^{n})$-derivation of $H$
if for every $p,q \in H$, we have
\begin{align*}
&\text{(i)}\quad d\circ\alpha = \alpha \circ d, d\circ\epsilon =\epsilon \circ d,\\
&\text{(ii)}\quad\gamma(d(p\dashv q)) = \delta(d(p)\dashv\alpha^{m}\epsilon^{n}(q)) + (-1)^{|p||d|}\lambda(\alpha^{m}\epsilon^{n}(p)\dashv d(q)),\\
&\text{(iii)}\quad\gamma(d(p\vdash q)) = \delta(d(p)\vdash\alpha^{m}\epsilon^{n}(q)) + (-1)^{|p||d|}\lambda(\alpha^{m}\epsilon^{n}(p)\vdash d(q)).\\
\end{align*}
Denote by $Der^{\gamma,\delta,\lambda}(H)$ the set of $(\gamma,\delta,\lambda)-(\alpha^{m}\epsilon^{n})$-derivations of the  BiHom-superdialgebra $(H,\dashv,\vdash,\alpha,\epsilon)$.
\end{defn}
\begin{prop}
Let $d \in Der_{(\alpha^{m},\epsilon^{n})}^{(\gamma,\delta,\lambda)}(H) \quad$ and $\quad d^{\prime} \in Der_{(\alpha^{m^{\prime}},\epsilon^{n^{\prime}})}^{(\gamma^{\prime},\delta^{\prime},\lambda^{\prime})}(H)$. Then
$$[d, d^{\prime}] \in Der_{(\alpha^{m+m^{\prime}},\epsilon^{n+n^{\prime}})}^{(\gamma+\gamma^{\prime},\delta+\delta^{\prime},\lambda+\lambda^{\prime})}(H).$$
\end{prop}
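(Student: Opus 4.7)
The plan is to follow the template of Proposition \ref{prop1}, verifying in turn the three defining axioms of a $(\gamma+\gamma',\delta+\delta',\lambda+\lambda')$-$(\alpha^{m+m'}\epsilon^{n+n'})$-derivation for the commutator $[d,d'] = d\circ d' - (-1)^{|d||d'|}d'\circ d$. The first step is routine: since each of $d$ and $d'$ commutes with $\alpha$ and with $\epsilon$ by hypothesis, distributing the compositions inside the bracket yields $[d,d']\circ\alpha = \alpha\circ [d,d']$ and $[d,d']\circ\epsilon = \epsilon\circ[d,d']$ in exactly the same three-line fashion as in the last part of the proof of Proposition \ref{prop1}.

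The core of the argument lies in verifying the generalized derivation identity for the two products $\dashv$ and $\vdash$. For $\dashv$, I would expand $(d\circ d')(p\dashv q)$ by first using the $(\gamma',\delta',\lambda')$-identity of $d'$ to rewrite $d'(p\dashv q)$ as a sum of two terms, and then applying the $(\gamma,\delta,\lambda)$-identity of $d$ to each of these two terms. A symmetric expansion then handles $(d'\circ d)(p\dashv q)$. Pairing the two expansions according to the definition of the bracket, and using the commutation relations $d\circ \alpha^{m'}\epsilon^{n'} = \alpha^{m'}\epsilon^{n'}\circ d$ and $d'\circ \alpha^{m}\epsilon^{n} = \alpha^{m}\epsilon^{n}\circ d'$ (which follow from the commutation with $\alpha$ and $\epsilon$), the ``crossed'' terms that contain one free factor of $d$ and one of $d'$ should cancel in the same manner as in Proposition \ref{prop1}. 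What remains regroups into $[d,d'](p)\dashv\alpha^{m+m'}\epsilon^{n+n'}(q)$ and $\alpha^{m+m'}\epsilon^{n+n'}(p)\dashv [d,d'](q)$, with the correct parity sign $(-1)^{|p|(|d|+|d'|)}$. The identity for $\vdash$ follows by the exact same argument with $\vdash$ everywhere in place of $\dashv$.

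The main obstacle will be the bookkeeping of signs and scalars. Eight terms appear in the full expansion of $[d,d'](p\dashv q)$, each carrying a product of two of the coefficients among $\gamma,\delta,\lambda,\gamma',\delta',\lambda'$ together with a Koszul sign generated by moving $d$ and $d'$ past homogeneous inputs. One has to check carefully that the scaling factors combine on both sides to give precisely the additive parameters $(\gamma+\gamma',\delta+\delta',\lambda+\lambda')$ claimed in the conclusion, and that the signs $(-1)^{|d||d'|}$, $(-1)^{|p||d|}$, $(-1)^{|p||d'|}$ multiply so that the mixed terms cancel in pairs while the two surviving terms acquire the common sign $(-1)^{|p|(|d|+|d'|)}$ demanded by the generalized derivation axiom.
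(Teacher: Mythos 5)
Your plan coincides with the paper's own treatment of this proposition, which consists entirely of the sentence ``apply the same method as Proposition \ref{prop1}.'' The problem is that the one step you explicitly defer --- ``check carefully that the scaling factors combine on both sides to give precisely the additive parameters'' --- is exactly the step that does not work. Write $A=\alpha^{m}\epsilon^{n}$ and $A'=\alpha^{m'}\epsilon^{n'}$. The defining identity for $d'$ reads $\gamma'\,d'(p\dashv q)=\delta'(d'(p)\dashv A'(q))+(-1)^{|p||d'|}\lambda'(A'(p)\dashv d'(q))$; to expand $dd'(p\dashv q)$ you must apply $\gamma d$ to both sides and then the $(\gamma,\delta,\lambda)$-identity of $d$ to each resulting term, which forces every surviving term to carry a \emph{product} of one unprimed and one primed coefficient. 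After the crossed terms cancel against those of $(-1)^{|d||d'|}d'd(p\dashv q)$ (using $dA'=A'd$, $d'A=Ad'$), what remains is
\begin{equation*}
\gamma\gamma'\,[d,d'](p\dashv q)=\delta\delta'\bigl([d,d'](p)\dashv AA'(q)\bigr)+(-1)^{|p|(|d|+|d'|)}\lambda\lambda'\bigl(AA'(p)\dashv [d,d'](q)\bigr),
\end{equation*}
and likewise for $\vdash$. So the method proves $[d,d']\in Der^{(\gamma\gamma',\,\delta\delta',\,\lambda\lambda')}_{(\alpha^{m+m'},\,\epsilon^{n+n'})}(H)$, with \emph{multiplied} parameters, not the additive triple $(\gamma+\gamma',\delta+\delta',\lambda+\lambda')$ claimed in the statement.

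The additive conclusion is not merely unproved by this route; it is implausible as stated. The defining condition is homogeneous in $(\gamma,\delta,\lambda)$, so the parameter triple of a generalized derivation is only determined up to a common nonzero scalar, and a sum of triples is not invariant under independent rescalings --- whereas the product is. Concretely, if $d$ satisfies the $(1,1,0)$-condition and $d'$ the $(1,0,1)$-condition, the computation above gives $[d,d'](p\dashv q)=0$ (the $(1,0,0)$-condition), which is in general strictly different from the $(2,1,1)$-condition the proposition asserts. Two further points you should make explicit if you write this out: the expansion requires either $\gamma\gamma'\neq 0$ or that you carry the identities in the multiplied-through form displayed above without dividing; and the verification of $[d,d']\circ\alpha=\alpha\circ[d,d']$ and $[d,d']\circ\epsilon=\epsilon\circ[d,d']$ is indeed the routine three-line computation you describe.
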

\begin{proof}
 Applying the similar way to Proposition \ref{prop1} completes the proof.
\end{proof}
\begin{defn}
Let $(H,\dashv,\vdash,\alpha,\epsilon)$ be a BiHom-superdialgebra.
A linear maping $d : H \longrightarrow H$ is said to be an $\alpha^{k}\epsilon^{l}$-quasi derivation of the  BiHom-superdialgebra $(H,\dashv,\vdash,\alpha,\epsilon)$ if there is a derivation $d^{\prime} : H \longrightarrow H$
\begin{align*}
&\text{(i)}\quad d\circ\alpha = \alpha \circ d, \epsilon \circ d=\quad d\circ\epsilon,\\
&\text{(ii)} \quad d^{\prime}\circ\alpha = \alpha \circ d^{\prime}, \epsilon \circ d^{\prime}=\quad d^{\prime}\circ\epsilon \\
&\text{(iii)}\quad d^{\prime}(p\dashv q) = d(p)\dashv\alpha^{m}\epsilon^{n}(q) + (-1)^{|p||d|}(\alpha^{m}\epsilon^{n}(p)\dashv d(q)),\\
&\text{(iv)}\quad d^{\prime}(p\vdash q) = d(p)\vdash\alpha^{m}\epsilon^{n}(q) + (-1)^{|p||d|}(\alpha^{m}\epsilon^{n}(p)\vdash d(q))
\end{align*}
for all homogeneous elements $p,q \in H$.
\end{defn}
Denote by $QDer_{\alpha^{m}\epsilon^{n}}(H)$ the set of $\alpha^{m}\epsilon^{n}$-quasi derivations of $H$.
The set of all Quasi-derivations of $H$ is given by $$QDer(H) = (\bigoplus_{m,n\geq 0}QDer_{\alpha^{m},\epsilon^{n}}(H))_{\overline{0}}+(\bigoplus_{m,n\geq 0}QDer_{\alpha^{m},\epsilon^{n}}(H))_{\overline{1}}.$$

\end{document}